\numberwithin{equation}{section}
\theoremstyle{plain}
\newtheorem{thm}{Theorem}[section]
\newtheorem{prop}[thm]{Proposition}
\newtheorem{lem}[thm]{Lemma}
\newtheorem{defi}[thm]{Definition}
\theoremstyle{definition}
\theoremstyle{remark}
\newtheorem{rem}[thm]{Remark}
\newcommand{\R}{\mathbb{R}}
\newcommand\pref[1]{(\ref{#1})}
\let \eps\varepsilon
\newcommand\FF{{\cal F}}
\newcommand\dist{\mathop{\mathrm{dist}}\nolimits}
\newcommand\Lip{\mathrm{Lip}}
\def\<#1,#2>{\left<#1,#2\right>}
\newcommand\indi{{\mathbf{1}}}
\newcommand\tilu{\widetilde{u}}
\newcommand\tilp{\widetilde{p}}
\newcommand\tilv{\widetilde{v}}
\newcommand\tilq{\widetilde{q}}
\newcommand\tilz{\widetilde{z}}
\newcommand\BBB{{\cal {B}}}
\newcommand\AAA{{\cal {A}}}
\newcommand\tA{\widetilde{\AAA}}
\newcommand\zb{{\overline{z}}}
\newcommand\uy{{\underline{y}}}
\title{Existence of solutions to principal-agent problems with adverse selection under minimal assumptions}
\author{Guillaume Carlier\thanks{CEREMADE, UMR CNRS 7534, PSL, Universit\'e Paris IX
Dauphine, Pl. de Lattre de Tassigny, 75775 Paris Cedex 16, FRANCE and INRIA-Paris, MOKAPLAN,
\texttt{carlier@ceremade.dauphine.fr}}
\and
Kelvin Shuangjian Zhang \thanks{CNRS and D\'epartement de Math\'ematiques et Applications, \'Ecole Normale Sup\'erieure / Universit\'e PSL, Paris, FRANCE.
\texttt{szhang@ens.fr}}
}
\begin{document}

\maketitle

\begin{abstract}
We prove an existence result for the principal-agent problem with adverse selection  under general assumptions on preferences  and  allocation spaces. Instead of assuming that the allocation space is finite-dimensional or compact, we consider a more general coercivity condition which takes into account the principal's cost and the agents' preferences. Our existence proof  is simple and flexible enough to adapt to partial participation models as well as to the case of type-dependent budget constraints.

\end{abstract}

\textbf{Keywords:} principal-agent problems with adverse selection, coercivity, partial participation, budget constraint.

\section{Introduction}\label{sec-intro}

The principal-agent problem with adverse selection plays a distinguished role in modern microeconomic theory and has attracted a considerable amount of attention due to its numerous economic applications such as nonlinear pricing (Mussa and Rosen \cite{MussaRosen},  Armstrong \cite{Armstrong}, Rochet and Chon\'e \cite{Rochet}), taxation theory (Mirrlees \cite{Mirrlees}),  regulation (Laffont and Tirole \cite{LaffontTirole}),  to name just a few.  In these problems,  the principal cannot observe  agents' types; hence her profit maximization program is constrained by  incentive compatibility. This leads to variational problems subject to global constraints which are difficult to solve in general. The goal of the present paper is to present a rather elementary approach to the existence of optimal contracts in adverse selection problems under minimal assumptions.

\smallskip

 To fix ideas, let us consider a standard monopoly optimal nonlinear pricing problem. Denoting by $Q\subset \R_+^m$ the set of products  that are technically feasible for the monopolist and by $I$ a certain range  of prices, agents'  preferences are given by the utility $U(x,q, p)$ which depends on their (unobservable) type $x\in X$, the product  $q\in Q$ and the price $p\in I$. The monopolist knows the distribution of types $\mu$ and has a cost function denoted by $q\mapsto c(q)$. Her problem then consists of maximizing her total profit
\begin{equation}\label{defopi}
\pi:=\int_X (p(x)-c(q(x)) \mbox{d} \mu(x)
\end{equation}
among contracts $x\mapsto (q(x), p(x))\in Q\times I$,  which are incentive compatible, i.e.,
\[U(x, q(x),p(x)) \geq   U(x, q(x'),p(x')), \; \forall (x,x')\in X^2,\]
and individually rational, i.e., satisfying the participation constraint
\[ U(x, q(x),p(x)) \geq U(x, q_0, p_0), \; \forall x\in X\]
where $(q_0, p_0)$ is a certain outside option contract available to the agents. 
The multidimensional case $m>1$ is considerably harder than the unidimensional case. Indeed, when $m=1$, the standard  single-crossing condition enables one to use specific arguments based either on optimal control (as in Laffont and Guesnerie \cite{GuesnerieLaffont}) or monotonicity (as in Mussa and Rosen \cite{MussaRosen} or Jullien \cite{Jullien}). 

\smallskip

In higher dimensions, if $Q$ and $I$ are compact, the existence of an optimal contract is well-known under general assumptions on the preferences and the cost. It follows for instance from the general results of Monteiro and Page \cite{MonteiroPage} (also see Carlier \cite{Carlier} for the quasilinear case) or  Ghisi and Gobbino \cite{GhisiGobbino} who developed an elegant and original Gamma-convergence approach. More recently,  N\"oldeke and Samuelson \cite{NoldekeSamuelson15p}, McCann and Zhang \cite{McCannZhang}, and Zhang \cite{Zhang} have established general  existence results  emphasizing the role of duality and generalized convexity. N\"oldeke and Samuelson provide a general existence result assuming that the type and product spaces are compact, by  a duality argument based on Galois connections. McCann and Zhang not only show a general existence result assuming a single-crossing type condition and boundedness of the agent-type and product-type spaces, but also generalize uniqueness and convexity results of Figalli, Kim and McCann \cite{FigalliKimMcCann} to the non-quasilinear case. In the vein of Carlier \cite{Carlier}, Zhang \cite{Zhang} proves a general existence result using  generalized convex analysis,  under weaker assumptions on the product domain and without assuming the generalized single-crossing condition from \cite{McAfeeMcMillan88}.

\smallskip

Why should we bother with yet another existence result, then? Firstly,  compactness of $Q$ and/or  $I$ is a  severe restriction which rules out many important examples. In particular, upper bounds on prices should come as a result of the  model rather than as an assumption. Secondly,  for an optimization problem to have solutions, compactness of the admissible set can be replaced by a weaker assumption, which takes advantage of properties of both the objective and the constraints. For instance, in  \pref{defopi}, $\pi$ cannot be too negative and $U$ cannot be too small because of the participation constraint. This  gives extra restrictions on contracts; our assumption is that these restrictions are enough to force approximate optimizers to remain in a compact set.

\smallskip

Consider the monopoly pricing problem above in the extreme case where $X=\{x\}$ is a singleton (so that the adverse selection problem disappears) and $U(x,q, p)=b(x)\cdot q-p$. Then the optimal contract simply corresponds to setting $p=p_0 +b(x)\cdot (q-q_0)$ and finding $q$ by maximizing $b(x) \cdot q- c(q)$. 
The existence of such an optimal $q$ is obvious if $Q$ is compact and $c$ is lower semicontinuous. However, compactness  of $Q$  can be replaced by an assumption, called coercivity, which requires compactness of the smaller set $\{q\in Q \: : \; c(q) \leq c(q_0)+ b(x) \cdot (q-q_0)\}$ (which is automatically bounded if $c$ is superlinear for instance). This elementary example also shows that coercivity is indeed a minimal assumption\footnote{For instance,  if $Q=\R_+^m$, $c(q)=\sqrt{\vert q\vert}$ and $b(x)\in \R_{++}^m$ the principal's profit is unbounded from above.}. This also strongly suggests that the natural condition for the existence of optimal contracts is the relative compactness of the set of $(q,p)$ for which $p-c(q)\geq p_0-c(q_0)$ and $U(x, q,p) \geq U(x, p_0, q_0)$ for at least one type $x$, rather than the compactness of $Q$ and $I$. This is precisely the coercivity condition that we will consider (see \pref{defofK}-\pref{hyph4}) and under which we will prove existence of an optimal contract. Another (more technical at first glance) advantage of our approach is that, contrary to the references listed above, the contracts  we consider belong to a general Polish space, which can be infinite-dimensional (functions of time or random variables for instance).

\smallskip

Our  proof involves two steps. The first step consists of showing that any feasible contract can be improved by another one which yields a larger benefit than the outside option to the principal for every type of agents. This key observation is not new, it appears in Monteiro  and Page \cite{MonteiroPage} and  Carlier \cite{Carlier},  but these authors did not take advantage of it to get rid of their compactness assumptions.  Thanks to our coercivity assumption, the improved contracts remain in a compact set and the existence proof can be carried out along the same lines as, for instance,  Monteiro and Page \cite{MonteiroPage}. We then extend our existence result in two directions. The first extension considers the case where the outside option is not necessarily feasible for the principle which leads to  partial participation models  as in Jullien \cite{Jullien}. Our  second extension concerns agents facing a type-dependent budget constraint as in the works of Monteiro and Page \cite{MonteiroPage} and Che and Gale \cite{CheGale}. Type-dependent budget constraints introduce possible discontinuities in preferences. While Monteiro and Page showed that the resulting difficulties may be overcome by a certain nonessentiality assumption, we will follow a slightly different route showing that a non-atomicity condition can be used instead.

\smallskip

The paper is organized as follows. Section \ref{sec-state} presents the basic model and main assumptions. In section \ref{sec-existence}, we establish an existence result for the basic model.  Section \ref{sec-pppap} shows how to extend the existence proof to models with partial participation, as in Jullien \cite{Jullien}. In section \ref{sec-budget}, we generalize the analysis to the case of type-dependent budget constraints for the agents, as in Monteiro and Page \cite{MonteiroPage} and  Che and Gale \cite{CheGale}. Finally, we have gathered in the appendix several simple measurable selection results used throughout the paper.

\section{Problem statement and assumptions}\label{sec-state}

The agents' type space is a general probability space $(X, \FF, \mu)$.  The allocation space is denoted by $Z$ and  assumed to be a Polish space (i.e.,  a separable and completely metrizable topological space). The agents' preferences are given by a function $U$: $X\times Z \to \R$ for which we assume that
\begin{equation}\label{hyph1}
\forall x\in X, U(x, .) \mbox{ is continuous on $Z$}, 
\end{equation}
and
\begin{equation}\label{hyph2}
\forall z\in Z, \; U(., z) \mbox{ is $\FF$-measurable on $X$.}
\end{equation}

Agents have access to an outside option $z_0\in Z$. A contract is a measurable map $z$ : $X\to Z$, and it is called \emph{feasible} if it is both individually rational, i.e.,
\begin{equation}\label{ir}
U(x,z(x)) \geq U(x, z_0), \; \forall x\in X, 
\end{equation}
 and incentive compatible, i.e.,
 \begin{equation}\label{ic}
 U(x,z(x))\geq U(x, z(x')), \; \forall (x, x')\in X\times X.
  \end{equation}
 Finally  a cost function $C$ : $Z\to \R\cup\{+\infty\}$ is given for the principal which we assume to satisfy
 \begin{equation}\label{hyph3}
 C \mbox{ is  lower semicontinuous  and }  C(z_0)<+\infty.
  \end{equation}
The principal's problem is to find a cost minimizing feasible contract\footnote{From now on, we adopt the  convention that the principal is a cost minimizer instead of a profit maximizer, hoping this will not create any confusion for the reader.}:
\begin{equation}\label{pap}
\inf \Big\{ \int_X C(z(x)) \mbox{d} \mu(x) \; : z \mbox{ : $X \to Z$ feasible }\Big\}.
\end{equation}

We will prove  in the next section that \pref{pap} admits a solution under an additional coercivity assumption. Defining 
\begin{equation}\label{defofK}
K:=\overline{ \{z\in Z \; : C(z)\leq C(z_0), \; \mbox{ and } \exists x\in X \mbox{ : } U(x,z)\geq U(x,z_0)\}},
\end{equation}
our coercivity assumption is that
\begin{equation}\label{hyph4} 
K  \mbox{ is compact}.
\end{equation}
Our coercivity condition \pref{hyph4} requires  allocations which are (i) less costly than the outside option for the principal and  (ii) preferred to it by at least one type of agents, to form a relatively compact set. It is not only weaker than the compactness of $Z$ but also more natural in the sense that it takes into account both the cost and the agents' preferences. As explained in the introduction, it is not difficult to see (even when $X$ is  a singleton) that this assumption cannot be weakened if one wants  \pref{pap} to admit solutions.

\section{Existence of an optimal contract}\label{sec-existence}

\subsection{An a priori estimate}

The main argument for the existence of a solution is based on the following result:  the principal can always improve her payoff using contracts with values in $K$. This argument is not new:  a similar observation was made in \cite{MonteiroPage} and \cite{Carlier} but it was not exploited to derive existence results when $Z$ is not compact.

\begin{prop}\label{apriori}
Assume \pref{hyph1}-\pref{hyph2}-\pref{hyph3} and \pref{hyph4}. Let $z$ be a feasible contract.  Then there exists another feasible contract $\tilz$ such that $\tilz(X)\subset K$ and 
\[\int_X C(\tilz(x)) \mbox{d} \mu(x) \leq \int_X C(z(x)) \mbox{d} \mu(x).\] 

\end{prop}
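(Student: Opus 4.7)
The plan is to reduce the menu of allocations offered to the agents to only those whose cost is at most $C(z_0)$, then let each agent select their preferred option from this restricted menu. Concretely, I would set
\[M^{*}:=\{z(x'):\,x'\in X,\;C(z(x'))\le C(z_{0})\}\cup\{z_{0}\},\qquad K^{*}:=\overline{M^{*}}.\]
Every element of $M^{*}$ lies in $K$: either it equals $z_0$, or it is some $z(x')$ with $C(z(x'))\le C(z_0)$, and by \pref{ir} $U(x',z(x'))\ge U(x',z_0)$. Hence $K^{*}\subset K$, and by \pref{hyph4} the set $K^{*}$ is compact. Moreover, since $C\le C(z_0)$ on $M^{*}$ and $C$ is lower semicontinuous by \pref{hyph3}, we get $C(z')\le C(z_0)$ for every $z'\in K^{*}$.

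For each $x\in X$ the continuous function $U(x,\cdot)$ attains its maximum on the compact set $K^{*}$. I would then split the type space into the measurable sets $X_{1}:=\{x:\,C(z(x))\le C(z_{0})\}$ and $X_{2}:=X\setminus X_{1}$ (measurability of $X_{1}$ follows from \pref{hyph3} and the $\FF$-measurability of $z$), and define $\tilz=z$ on $X_{1}$, while on $X_{2}$ picking a measurable selection $\tilz(x)\in\argmax_{z'\in K^{*}}U(x,z')$ via the measurable selection lemmas collected in the appendix. The key coherence observation is that if $x\in X_{1}$, then $z(x)\in M^{*}\subset K^{*}$ and, combining \pref{ic}-\pref{ir} for $z$ with the continuity assumption \pref{hyph1} (to handle elements in $K^{*}\setminus M^{*}$ by passing to the limit), $z(x)$ itself attains the maximum of $U(x,\cdot)$ over $K^{*}$.

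It then remains to verify feasibility and the cost inequality. By construction $\tilz(X)\subset K^{*}\subset K$. Individual rationality holds because $z_{0}\in K^{*}$, so $U(x,\tilz(x))\ge U(x,z_{0})$. For incentive compatibility, note that in every case $\tilz(x)$ maximizes $U(x,\cdot)$ over $K^{*}$ (on $X_{2}$ by definition, on $X_{1}$ by the coherence observation above) and $\tilz(x')\in K^{*}$, whence $U(x,\tilz(x))\ge U(x,\tilz(x'))$. Finally, $C(\tilz(x))=C(z(x))$ on $X_{1}$ and $C(\tilz(x))\le C(z_{0})<C(z(x))$ on $X_{2}$, yielding $C\circ\tilz\le C\circ z$ pointwise and hence the integral estimate. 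The only genuinely nontrivial step is the measurable selection on $X_{2}$, which is precisely what the appendix is designed to provide.
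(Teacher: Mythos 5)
Your proof is correct and follows essentially the same strategy as the paper's: restrict the menu to the allocations costing at most $C(z_0)$ together with $z_0$, take its closure (your $K^*$ plays the role of the paper's $\AAA\cap K$), let each type pick a preferred element there via a measurable selection, and keep $z(x)$ unchanged on $\{x : C(z(x))\le C(z_0)\}$. The only cosmetic differences are that you make the $X_1/X_2$ decomposition explicit and handle the degenerate case $X_1=\emptyset$ uniformly rather than separately.
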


\begin{proof}
We may of course assume that 
\begin{equation}\label{nonemptypp}
\{x\in X \; : \; C(z(x)) \leq C(z_0)\}\neq \emptyset
\end{equation}
since otherwise the constant contract $\tilz\equiv z_0$ satisfies the desired claim.

\smallskip

Let us assume \pref{nonemptypp} and define for every $x\in X$,
\[u(x):=U(x,z(x)).\]
By individual rationality and   incentive compatibility, one can write
\[u(x)=\max_{z'\in \AAA} U(x,z') \mbox{ where } \AAA:=\{z_0\} \cup \overline{\{z(x'), \; x'\in X\}}.\] 
Let us note that $\AAA \cap K\neq \emptyset$  and define
\[\tilu(x):=\max_{z'\in \AAA  \cap K} U(x,z').\]
We thus have $U(x, z_0) \leq \tilu(x) \leq u(x)$ and  $\tilu(x)=U(x,z(x))=u(x)$ whenever $C(z(x))\leq C(z_0)$. Since $\AAA \cap K$ is compact, the set
\[\Gamma(x):=\{ z \in \AAA \cap K \mbox{ : }   \tilu(x)=U(x,z)\}\]
 is nonempty and closed, for every $x\in X$, thanks to assumption \pref{hyph1}. Moreover, thanks to \pref{hyph2}, the set valued map $\Gamma$ has an $\FF$-measurable selection (see the Appendix for details) which we denote by $\tilz$. Note that if $C(z(x))\leq C(z_0)$ then $z(x)\in \Gamma(x)$.  We may therefore also assume that $z(x)=\tilz(x)$ for every $x\in X$ for which $C(z(x))\leq C(z_0)$. By construction, $\tilz$ is individually rational. For every $(x,x')\in X\times X$, since $\tilz(x')\in \AAA \cap K$, we have $\tilu(x)=U(x,\tilz(x))\geq U(x, \tilz(x'))$ so that $\tilz$ is also incentive compatible. Finally, $C(\tilz(x))=C(z(x))$ when $C(z(x))\leq C(z_0)$, and $C(\tilz(x))\leq C(z_0) \leq C(z(x))$ otherwise which shows that the feasible contract $\tilz$ has lower cost than the original one $z$ and it takes by construction its values in $K$.

\end{proof}

\begin{rem}\label{rmk1_1}
The economic intuition behind the proof of Proposition \ref{apriori} is quite clear: the principal is better off by removing bad contracts (i.e. contracts which are more costly than the oustide option). This argument  seemingly relies on the fact that the cost function does not depend on agents' types. However, it may be natural to allow for a cost $c(x,z)$ which is  also type-dependent (e.g. in common value problems).  For instance, if $c(x,z)=F(x, C(z))$ with $F$ increasing in its second argument, the proof above still works. Indeed, the contract $\tilz$ constructed above actually satisfies $C(\tilz(x)) \leq C(z(x))$ for every $x$: it is therefore again an improvement for the principal. More general type-dependent costs might be considered as well.  Assume that $c(x,.)$ is lower semicontinuous, that $c(x, z_0)<+\infty$ for every $x$ and that $c$ has the property that whenever $c(x,z) \leq c(x, z_0)$    for some $(x,z)\in X\times Z$ then $c(x',z) \leq c(x', z_0)$ for every $x'\in X$.  Modifying the set  $K$ defined  in  \pref{defofK} as
\begin{equation*}
K:=\overline{ \{z\in Z \;  : \; \exists x\in X \mbox{ s.t. } \;  c(x,z)\leq c(x,z_0) \; \mbox{ and }  U(x,z)\geq U(x,z_0)\}},
\end{equation*}
then Proposition \ref{apriori} still holds while replacing the integrals of $C$ in the statement by those of $c$ (and the set defined in \pref{nonemptypp} by $\{x\in X: c(x, z(x)) \le c(x, z_0)\}$). For such costs, it is not difficult to extend the existence analysis of paragraph \ref{par-exres}.

\end{rem}

\subsection{An existence result}\label{par-exres}

Proposition \ref{apriori}  enables us to reduce the principal's problem to the compact allocation space $K$ (given by \pref{defofK})  instead of $Z$. From this reduction, classical arguments along the lines of \cite{MonteiroPage}, \cite{Carlier}, \cite{Zhang} give the existence of an optimal contract:

\begin{thm}\label{existthm1}
Under assumptions \pref{hyph1}-\pref{hyph2}-\pref{hyph3} and \pref{hyph4}, the principal's problem \pref{pap} admits at least one solution.

\end{thm}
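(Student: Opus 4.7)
The plan is to combine Proposition \ref{apriori} with a standard direct-method argument adapted to the ``menu'' representation of feasible contracts. By Proposition \ref{apriori}, it suffices to take a minimizing sequence $(z_n)$ of feasible contracts with $z_n(X)\subset K$, where $K$ is compact by \pref{hyph4}. A key preliminary observation is that for any feasible contract $z$ with $z(X)\subset K$, incentive compatibility, individual rationality, and continuity of $U(x,\cdot)$ combine to give
\[
U(x,z(x))=\max_{z'\in A}U(x,z'),\qquad A:=\overline{z(X)\cup\{z_0\}}\subset K,
\]
so that feasible contracts are essentially parametrized by their compact menus.

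Starting from such a minimizing sequence $(z_n)$ with associated menus $A_n:=\overline{z_n(X)\cup\{z_0\}}$, I would invoke compactness of the Hausdorff hyperspace of closed subsets of the compact set $K$ to extract, up to a subsequence, $A_n\to A^*$ in Hausdorff distance, where $A^*$ is closed in $K$ and contains $z_0$. Defining $u^*(x):=\max_{z'\in A^*}U(x,z')$ (which is $\FF$-measurable thanks to \pref{hyph1}-\pref{hyph2} and separability of the compact $A^*$), the set-valued map $\Gamma(x):=\{z\in A^*:U(x,z)=u^*(x)\}$ has nonempty compact values and a measurable graph. Using a measurable selection theorem of the kind deferred to the appendix, one picks a measurable $z^*:X\to A^*$ with $z^*(x)\in\argmin\{C(z):z\in\Gamma(x)\}$, the argmin being nonempty by compactness of $\Gamma(x)$ and lower semicontinuity of $C$ from \pref{hyph3}. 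Feasibility of $z^*$ is immediate: individual rationality from $z_0\in A^*$, and incentive compatibility because $z^*(x')\in A^*$ for every $x'$.

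The main obstacle is passing to the limit in the cost functional, since we do not have pointwise convergence of $z_n$ to $z^*$. I will argue pointwise. Fix $x\in X$ and extract a subsequence $(n_k)$ realizing $\liminf_n C(z_n(x))$; by compactness of $K$, a further subsequence satisfies $z_{n_k}(x)\to z_\infty(x)\in K$. Hausdorff convergence $A_n\to A^*$ forces $z_\infty(x)\in A^*$ and yields $\max_{z'\in A_n}U(x,z')\to u^*(x)$ by continuity of $U(x,\cdot)$; since $U(x,z_n(x))=\max_{z'\in A_n}U(x,z')$ by the menu observation, we obtain $U(x,z_\infty(x))=u^*(x)$, i.e., $z_\infty(x)\in\Gamma(x)$. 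Lower semicontinuity of $C$ together with the cost-minimizing definition of $z^*(x)$ then give
\[
C(z^*(x))\le C(z_\infty(x))\le\liminf_n C(z_n(x))
\]
for every $x\in X$. Since $C$ is lower semicontinuous on the compact set $K$, it is bounded below there, so Fatou's lemma applies and yields $\int_X C(z^*)\,d\mu\le\liminf_n\int_X C(z_n)\,d\mu$, showing that $z^*$ is an optimal contract.
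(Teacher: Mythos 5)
Your proposal is correct and follows the paper's strategy almost step for step: reduction to $K$ via Proposition \ref{apriori}, Hausdorff convergence of the menus $\AAA_n\to\AAA^*$, pointwise convergence of the indirect utilities $u_n\to u^*$, a measurable selection, and Fatou's lemma. The one genuine deviation is the selection step. The paper selects $z^*(x)$ directly as a cluster point of $z_n(x)$ realizing $\liminf_n C(z_n(x))$ (the map $\Gamma^*$ of \pref{gammas}, handled by Lemma \ref{ms1}), and then \emph{deduces} that $z^*(x)$ maximizes $U(x,\cdot)$ over $\AAA^*$ from the convergence $u_n(x)\to u^*(x)$. You instead first form the argmax set $\Gamma(x)\subset\AAA^*$ and select its cheapest element, then use a cluster point $z_\infty(x)$ only as a comparison to bound the cost. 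Both routes are sound, and your pointwise inequality $C(z^*(x))\le C(z_\infty(x))\le\liminf_n C(z_n(x))$ is valid; your $z^*$ may even be strictly cheaper than any cluster point, which is harmless.

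The one point you should not gloss over is the measurability of your nested selection $x\mapsto\argmin\{C(z):z\in\Gamma(x)\}$. Lemma \ref{ms2} gives weak measurability of the argmax map $\Gamma$ (via the identity $\Gamma^{-1}(F)=\{x: v_{A^*}(x)=v_{A^*\cap F}(x)\}$), but selecting a \emph{cost-minimizing} element inside $\Gamma(x)$ requires in addition that the marginal function $x\mapsto\min_{\Gamma(x)}C$ be measurable and that the corresponding argmin multifunction be weakly measurable; since $C$ is only lower semicontinuous, this does not follow from a countable dense subset argument and is not literally covered by Lemmas \ref{ms1}--\ref{ms3}. It is a standard fact (measurable compact-valued multifunction composed with a fixed lower semicontinuous normal integrand), but it is precisely the technical point the paper's choice of $\Gamma^*$ in \pref{gammas} is designed to avoid: there the preimage of a closed set is expressed through the single scalar condition $\liminf_n(\dist(z_n(x),F)+C(z_n(x)))=\liminf_n C(z_n(x))$, whose measurability is immediate from \pref{hyph2}. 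If you keep your version, you should either prove this selection lemma or cite it explicitly.
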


\begin{proof}
Let $(z_n)_n$ be a minimizing sequence for \pref{pap}, i.e., a sequence of feasible contracts such that
\begin{equation}\label{miniseq}
 \lim_n  \int_X C(z_n(x)) d\mu(x) =  \inf \pref{pap}.
\end{equation}
Using Proposition \ref{apriori}, we may further assume that $z_n(X)\subset K$ for each $n$. Then define
\[u_n(x):=U(x, z_n(x))=\max_{z \in \AAA_n} U(x,z) \mbox{ where } \AAA_n:=\{z_0\} \cup \overline{\{z_n(x), \; x\in X\}}.\]
Since the nonempty compact set $\AAA_n$ is included in $K$ for every $n$, we may assume, taking a subsequence if necessary, that $\AAA_n$ converges to some nonempty compact subset $\AAA^*$ of $K$ in the Hausdorff distance\footnote{Denoting by $d$  a distance that completely metrizes the topology of the separable space $Z$, and by   $\dist(A, z):=\inf_{z'\in A} d(z',z)$ the distance from $z$ to the set $A$, the Hausdorff distance between the sets $A$ and $B$ is  $d_H(A, B):=\max (\sup_{b\in B} \dist(A, b), \sup_{a\in A} \dist(B, a))$.}, i.e.,
\begin{equation}\label{hausd}
 \lim_n d_H(\AAA_n, \AAA^*) = 0.
\end{equation}
 Then define
 \begin{equation}
 u^*(x):=\sup_{z\in \AAA^*} U(x,z).
 \end{equation}
 Define also the set-valued map $x\in X\mapsto \Gamma^*(x)$ by
 \begin{equation}\label{gammas}
 \Gamma^*(x):=\{z\in  K\; : \; \exists n_j \to \infty \; \mbox{ s.t. }  \; z_{n_j}(x) \to z, \; C(z_{n_j}(x)) \to \liminf_n C(z_n(x))\}.
 \end{equation}
$\Gamma^*(x)$ is a nonempty compact set for every $x$ and our assumptions guarantee that $\Gamma^*$ has an $\FF$-measurable  selection which we denote by $z^*$ (see Lemma \ref{ms1} in the Appendix for details). Let $x\in X$ and $z$ be a cluster point of $z_n(x)$ such that $\limsup_n u_n(x)=\limsup_n U(x, z_n(x))=U(x,z)$. It follows from \pref{hausd}  that $z\in \AAA^*$, hence,
\begin{equation}\label{limsup}
\limsup_n u_n(x) \leq u^*(x).
\end{equation}
Now let $z\in \AAA^*$. Again by \pref{hausd}, there exists a sequence $(z'_n)_n$ converging to $z$ such that $z'_n \in \AAA_n$ for each $n$. By incentive compatibility, we have $u_n(x)=U(x, z_n(x))\geq U(x,z'_n)$ for each $n$ so that $\liminf_n u_n(x) \geq U(x, z)$. Taking the supremum in $z\in \AAA^*$, we get
\begin{equation}\label{liminf}
\liminf_n u_n(x) \geq u^*(x).
\end{equation}
From \pref{limsup}-\pref{liminf}, we deduce that $u_n(x)=U(x, z_n(x))$ converges to $u^*(x)$ for each $x\in X$. Choosing a subsequence of $z_n(x)$ that converges  to $z^*(x)$ therefore gives $u^*(x)=U(x, z^*(x))$.  Then, for any $(x',x)\in X\times X$, since $z^*(x')\in \AAA^*$, we have $u^*(x)=U(x, z^*(x)) \geq U(x, z^*(x'))$ which shows that $z^*$ is incentive compatible. Since $z_0\in \AAA^*$, we have $U(x, z^*(x)) \geq U(x, z_0)$  for each $x\in X$ so that $z^*$ is individually rational. Finally, Fatou's lemma (note that $C(z_n)$ is bounded from below by the minimum of $C$ on the compact set $K$) and the fact that $z^*(x)\in \Gamma^*(x)$,  where $\Gamma^*(x)$ is given by \pref{gammas}, give
\[  \inf \pref{pap} \geq   \int_X   \liminf_n C(z_n(x)) d\mu(x) \geq   \int_X   C(z^*(x)) d\mu(x),\]
so that $z^*$ solves \pref{pap}. 
 
\end{proof}

\subsection{Examples}\label{subsec-ex}

\subsubsection*{Finite-dimensional allocations: quasilinear preferences}

Consider the simple example in which $z=(p,q)\in \R\times \R^d$ where $p\in \R$ represents the price of the contract and $q\in \R^d$ is a list of product attributes. Assume that preferences are quasi linear, i.e.,
\[U(x,z) =b(x,q)-p,\]
where $b(., q)$ is measurable and $b(x,.)$ is Lipschitz with the same  Lipschitz constant $\Lip_{b}$ for every $x\in X$.  Let us also assume separability of the cost:
\[C(z)=c(q)-p,\]
with $c$ : $\R^d \to \R\cup\{+\infty\}$ lower semicontinuous and superlinear, i.e., such that 
\[\lim_{\Vert q \Vert \to \infty} \frac{c(q)}{\Vert q \Vert} =+\infty.\]

Denoting by $(p_0,q_0)$ the outside option, assume that $c(q_0)<+\infty$. If $c(q)-p\leq c(q_0)-p_0$ and $b(x,q)-p\geq b(x,q_0)-p_0$ for some $x\in X$, since $b(x,.)$ is $\Lip_{b}$ Lipschitz, we get  
\[c(q)\leq c(q_0) +p-p_0 \leq c(q_0)+ \Lip_{b} \Vert q- q_0\Vert\]
and the fact that $c$ is superlinear gives a bound on $q$. The bound on $p$ then directly follows. This shows that the closed set $K$ defined by  \pref{defofK} is bounded hence compact.

\subsubsection*{Finite-dimensional allocations: fully nonlinear preferences}

Following McCann and Zhang \cite{McCannZhang}, consider now a general nonlinear utility function of the form
\begin{flalign*}
	U(x,z) = G(x, q, p),
\end{flalign*}
where $z = (p, q) \in \R \times \R^d$ represents a contract and $x$ a type. Assume that $G$ is strictly decreasing in the price $p$, which means  that the same product with a higher price provides less utility to agents.

Each contract $z$ has a cost for the principal, which is
\begin{flalign*}
	C(z) = c(q) -p.
\end{flalign*}

Assume $c$ is superlinear in $q$, $G$ satisfies $\partial_{p} G(x, q, p) \le -\lambda <0$ for all $(x, z) \in X \times Z$ and $G(x, \cdot, p_0)$ is Lipschitz, uniformly in $x$, with  Lipschitz constant $\Lip_{G}$.  To show that $K$ is bounded, it is useful to define $K_1 = K \cap \{(q, p) \in  \R^d \times \R : p \le p_0 \}$ and $K_2 = K\setminus K_1$. 

By definition, we know that for any $(q, p) \in K_1$,
\begin{flalign*}
	c(q) - p_0 \le c(q) - p \le c(q_0) - p_0.
\end{flalign*} 
Since $c$ is superlinear, this implies  that $q$ is bounded. Since $c(q) -c(q_0) +p_0 \le p \le p_0$, $p$ is also bounded. Thus, $K_1$ is bounded.

\smallskip

 Now, if $(p,q)\in K_2$  there exists $x\in X$ such that $G(x, q, p)  \ge G(x, q_0, p_0)$, but since $p>p_0$ and  $\partial_{p} G \le -\lambda$, using the Lipschitz assumption on  $G(x,., p_0)$, we have
\[\begin{split}
G(x,q_0,p_0)\le G(x,q,p) \le G(x, q,p_0)  -\lambda(p-p_0)  \\ 
\le G(x,q_0,p_0)  -\lambda(p-p_0) + \Lip_{G} \Vert q - q_0 \Vert; 
\end{split}\]
hence 
\[0\leq p-p_0 \leq  \frac{  \Lip_{G}}{\lambda}  \Vert q - q_0 \Vert, \; c(q) \le c(q_0)+  \frac{  \Lip_{G}}{\lambda}  \Vert q - q_0 \Vert. \]

Since $c$ is superlinear, this implies $q$ is bounded, so is $p$ as well, and therefore $K_2$ is bounded. This shows that $K$ is compact.

\subsubsection*{Infinite-dimensional allocations}

We now consider the possibility that the allocation $z$ is infinite-dimensional (see \cite{BalderYannelis} for contracts taking values in a space of random variables), one can think for instance of a time-dependent function. We consider contracts of the form $z=(p,q)$ with $p\in\R$ and  $q\in Z:=L^2((0,T), \R^d)$, a utility of the form
\[U(x,z):=\int_0^T v(t,x,q(t))\mbox{d}t-p,\]
a cost 
\[C(z)=\int_0^T (c(t,q(t))+ \vert \dot{q}(t)\vert^2)  \mbox{d}t  -p,\]
(with $C=+\infty$ whenever $\dot{q}$ is not $L^2$) and an outside option $(z_0, q_0)$ with $\dot{q_0} \in L^2$. Then if $c(t,.)$ is superlinear uniformly in $t$ and $v(t,x,.)$ is Lipschitz uniformly in $(t,x)$, the set $K$ consists of $(p, q)\in \R\times L^2$ such that both $\int_0^T (\vert q \vert + \vert \dot{q}\vert^2) \mbox{d}t$ and $p$ are uniformly bounded; it is therefore compact in $\R\times L^2$ by the Rellich-Kondrachov Theorem (see \cite{brezis}).

\section{Partial participation}\label{sec-pppap}

In the model of section \ref{sec-state}, we assumed that the outside option $z_0$ belongs to the set of feasible contracts for the principal and has a finite cost. We also imposed the participation constraint for all agents, excluding the possibility of partial participation. If $C(z_0)\leq 0$, there is no real loss of generality in imposing full participation but if $C(z_0)>0$, the principal may have an interest in excluding some agents.  This more delicate situation was analyzed by Jullien \cite{Jullien} (in an otherwise standard quasilinear unidimensional framework). Our aim is to show that our approach to existence of an optimal contract can be extended to the partial participation case.

\subsection{Model and assumptions}

We assume that the agents' preferences are as in section \ref{sec-state}, i.e., they satisfy \pref{hyph1}-\pref{hyph2}. We are also given a type dependent reservation utility $u_0$: $X\to \R$ which  is assumed to be $\FF$-measurable. The principal's cost function $C$ : $Z\to \R\cup\{+\infty\}$ is lower semicontinuous. Given an incentive compatible contract $x\in X\mapsto z(x)$, we denote by $p_z$ the participation set:
\[p_z:=\{x\in X \; : \; U(x, z(x)) \geq u_0(x)\}\]
and we denote by $\indi_{p_z}$ its characteristic function:
\[\indi_{p_z}(x)=\begin{cases} 1 \mbox{ if  $U(x, z(x)) \geq u_0(x)$} \\ 0 \mbox{ otherwise.}\end{cases}\]
 The main departure from the full participation model of  section \ref{sec-state} is that instead of imposing the participation constraint, the principal's total cost will be integrated only on the participation set. We will assume that the principal can make nonnegative profit; that is,
 \begin{equation}\label{hyppp1}
\exists \zb\in Z \: : \;  C(\zb)\leq 0 \mbox{ and } \{x\in X \; : \; U(x, \zb) \geq u_0(x)\} \neq \emptyset
  \end{equation}
and that the set  
 \begin{equation*}
 F_0:=\overline{\{z \in Z \; : \; C(z) \leq 0 \mbox{ and } \exists x\in X \; : U(x, z) \geq u_0(x)\}},
 \end{equation*}
 which is nonempty thanks to \pref{hyppp1}, satisfies
  \begin{equation}\label{hyppp2}
F_0  \mbox{ is compact}.
  \end{equation}

The principal's problem then reads
\begin{equation}\label{pappartial}
\inf \Big\{ \int_X \indi_{p_z}(x) C(z(x)) \mbox{d} \mu(x) \; : z \mbox{ : $X \to Z$ incentive compatible}\Big\}.
\end{equation}

\subsection{Existence of an optimal contract}

\begin{prop}\label{aprioripartial}
Assume  that $C$ : $Z \to \R\cup\{+\infty\}$ is lower semicontinuous, \pref{hyph1}-\pref{hyph2}-\pref{hyppp1} and \pref{hyppp2}. Let $z$ be an incentive compatible  contract.  Then there exists another  incentive compatible contract $\tilz$ such that $\tilz(X)\subset F_0$ and 
\begin{equation}\label{improvpp}
 \indi_{p_{\tilz}}(x) C(\tilz(x)) \leq \indi_{p_z}(x) C(z(x)), \; \forall x\in X.
 \end{equation} 

\end{prop}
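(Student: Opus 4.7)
The plan is to follow the same template as the proof of Proposition~\ref{apriori}, but with an ambient set of alternative contracts $\AAA$ chosen carefully so that (i) $\AAA \subset F_0$, (ii) the agents whose original contracts are already ``good'' (valued in $F_0$ and accepted) can keep them, and (iii) incentive compatibility is preserved without appealing to individual rationality, which is absent here. The key point is that we must not add $\bar z$ as an alternative available to everybody: otherwise agents with good contracts might strictly prefer $\bar z$, breaking IC.

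First, partition $X$ according to whether agent $x$ is well served by $z$:
\[ X_1 := \{x \in X \: : \: U(x, z(x)) \geq u_0(x) \mbox{ and } C(z(x)) \leq 0\}, \qquad X_2 := X\setminus X_1.\]
By \pref{hyph1}-\pref{hyph2} the utility $U$ is Carath\'eodory, hence $x\mapsto U(x, z(x))$ is $\FF$-measurable; $C\circ z$ is measurable by lower semicontinuity of $C$, and $u_0$ is measurable by assumption, so $X_1 \in \FF$. When $X_1 = \emptyset$ the constant contract $\tilz \equiv \bar z$ already does the job: it is trivially incentive compatible, valued in $F_0$ by \pref{hyppp1}, and \pref{improvpp} holds because its left-hand side is at most $0$ while its right-hand side is either $0$ or equal to $C(z(x))>0$ (since $X_1 = \emptyset$).

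Suppose now $X_1 \neq \emptyset$ and let $\AAA := \overline{\{z(x): x\in X_1\}}$. By the definition of $X_1$ each $z(x)$ with $x\in X_1$ sits in $F_0$, so $\AAA$ is a closed subset of the compact set $F_0$, hence compact. I will set $\tilz(x) := z(x)$ for $x\in X_1$ and, for $x \in X_2$, choose $\tilz(x)$ in the compact nonempty set $\argmax_{z'\in \AAA} U(x, z')$ via the measurable selection tools of the Appendix, as in the proof of Proposition~\ref{apriori}. By construction $\tilz(X) \subset \AAA \subset F_0$.

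Checking IC amounts to four cases on $(x, x') \in X\times X$: if $x \in X_2$, the inequality $U(x, \tilz(x)) \geq U(x, \tilz(x'))$ is automatic since $\tilz(x)$ is the argmax on $\AAA$ and $\tilz(x') \in \AAA$; if $x, x' \in X_1$ the inequality is precisely the IC of $z$; the only delicate case is $x\in X_1$, $x'\in X_2$, where $\tilz(x')$ is a limit of some $z(y_k)$ with $y_k \in X_1$, and one concludes $U(x, z(x)) \geq U(x, z(y_k)) \to U(x, \tilz(x'))$ using the IC of $z$ together with the continuity \pref{hyph1}. This is exactly why $\AAA$ must not contain $\bar z$. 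As for \pref{improvpp}: on $X_1$ both sides equal $C(z(x))$ (because $\tilz(x)=z(x)$ and $x \in p_z \cap p_{\tilz}$); on $X_2$, the fact $\tilz(x) \in F_0$ gives $C(\tilz(x)) \leq 0$ by lower semicontinuity of $C$, so the left-hand side is $\leq 0$ while the right-hand side is $\geq 0$ (either $0$ or equal to $C(z(x))>0$). The main obstacle is really the careful design of $\AAA$; once that is settled, measurability and the cost comparison run as a routine adaptation of Proposition~\ref{apriori}.
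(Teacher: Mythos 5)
Your proof is correct and follows essentially the same route as the paper's: restrict the menu to a compact subset of $F_0$, keep the contracts of agents who are already well served, select measurably on the rest, and verify incentive compatibility and the pointwise cost comparison, with the degenerate case handled by the constant contract $\bar z$. The only (harmless) difference is your choice of restricted menu $\overline{\{z(x):x\in X_1\}}$ where the paper uses $\overline{z(X)}\cap F_0$, together with the corresponding case split; both versions rely on the same key point you identify, namely that $\bar z$ must not be adjoined to the menu in the non-degenerate case since individual rationality is unavailable here.
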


\begin{proof}
If $z(X) \subset Z\setminus F_0$ then $\indi_{p_z}(x) C(z(x))\geq 0$ for every $x\in X$; hence, thanks to \pref{hyppp1}, the constant contract $\tilz\equiv \zb$ has the desired properties. We thus now assume that $z(X)\cap F_0 \neq \emptyset$ and argue in a similar way as in the proof of Proposition \ref{apriori}. Define
\[u(x):=U(x, z(x))=  \max\{ U(x, z') \; : \;   z'\in \overline{z(X)} \} \]
and 
\[\tilu(x):=  \max \{ U(x, z') \;  : \;   z'\in \overline{z(X)}\cap F_0\}\]
and let $x\mapsto \tilz(x)$ be a measurable map such that $\tilz(x) \in  \overline{z(X)}\cap F_0$ and $\tilu(x)=U(x, \tilz(x))$ for every $x\in X$. Note that if $x\in p_z$ and $C(z(x)) \leq 0$ then $z(x)\in F_0$. We can therefore also impose $\tilz(x)=z(x)$ and $u(x)=\tilu(x)$ whenever $x\in p_z$ and  $C(z(x)) \leq 0$. By construction, $\tilz$ is incentive compatible and takes values in $F_0$; in particular $C(\tilz(x)) \leq 0$ for every $x\in X$. If $x\notin p_z$, or, if $x\in p_z$ and $C(z(x))>0$, then  \pref{improvpp} is obvious. Finally, if $x\in p_z$ and $C(z(x))\leq 0$, then $z(x)=\tilz(x)$ and $u(x)=U(x, z(x))=\tilu(x)=U(x, \tilz(x))$, so that $x\in p_{\tilz}$ and  \pref{improvpp} is an equality.

\end{proof}

\begin{thm}\label{existthmpartial}
Assume that $C$ : $Z \to \R\cup\{+\infty\}$ is lower semicontinuous, \pref{hyph1}-\pref{hyph2}-\pref{hyppp1} and \pref{hyppp2}. Then the principal's problem \pref{pappartial} admits at least one solution.

\end{thm}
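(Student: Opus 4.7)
The plan is to follow the blueprint of the proof of Theorem \ref{existthm1}, replacing feasibility by mere incentive compatibility and using Proposition \ref{aprioripartial} to confine a minimizing sequence inside the compact set $F_0$. The crucial structural bonus of that reduction is that $C\le 0$ throughout $F_0$, so the (a priori discontinuous) integrand satisfies $C(z_n(x))\le \indi_{p_{z_n}}(x)C(z_n(x))\le 0$ pointwise; this sandwich will be the main device neutralizing the discontinuity of $z\mapsto\indi_{p_z}$ when passing to the limit.

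Concretely, let $(z_n)_n$ be a minimizing sequence of incentive compatible contracts. By Proposition \ref{aprioripartial} I may assume $z_n(X)\subset F_0$ for every $n$. Setting $\AAA_n:=\overline{z_n(X)}\subset F_0$, Blaschke's selection theorem supplies a subsequence along which $\AAA_n\to\AAA^*$ in Hausdorff distance, with $\AAA^*$ nonempty and compact. Define
\begin{equation*}
u_n(x):=U(x,z_n(x))=\max_{z\in\AAA_n}U(x,z),\qquad u^*(x):=\sup_{z\in\AAA^*}U(x,z);
\end{equation*}
the Hausdorff convergence together with \pref{hyph1} yields $u_n(x)\to u^*(x)$ pointwise, exactly as in the proof of Theorem \ref{existthm1}. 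I then define $\Gamma^*(x)$ as in \pref{gammas} with $K$ replaced by $F_0$ and invoke Lemma \ref{ms1} to pick an $\FF$-measurable selection $z^*$ with $z^*(x)\in\Gamma^*(x)$. Choosing a subsequence along which $z_{n_j}(x)\to z^*(x)$ and using continuity of $U(x,\cdot)$ gives $U(x,z^*(x))=u^*(x)$; since $z^*(x')\in\AAA^*$ for every $x'$, incentive compatibility of $z^*$ then follows from the defining supremum of $u^*$.

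The only genuinely new step is to pass to the limit in the cost, which I do pointwise in two cases. If $x\in p_{z^*}$, then
\begin{equation*}
\indi_{p_{z^*}}(x)C(z^*(x))=C(z^*(x))\le\liminf_n C(z_n(x))\le\liminf_n \indi_{p_{z_n}}(x)C(z_n(x)),
\end{equation*}
where the first inequality uses the lsc of $C$ and the definition of $\Gamma^*(x)$, and the second uses $\indi_{p_{z_n}}(x)C(z_n(x))\ge C(z_n(x))$, valid because $C\le 0$ on $F_0$. If $x\notin p_{z^*}$, then $u^*(x)<u_0(x)$, and the pointwise convergence $u_n(x)\to u^*(x)$ forces $\indi_{p_{z_n}}(x)=0$ for all $n$ large, so both sides vanish. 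Since the integrands are uniformly bounded below by $\min_{F_0}C>-\infty$, Fatou's lemma yields $\int_X\indi_{p_{z^*}}C(z^*)\mbox{d}\mu\le\liminf_n\int_X\indi_{p_{z_n}}C(z_n)\mbox{d}\mu=\inf\pref{pappartial}$, so $z^*$ is optimal.

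The main delicate point I expect is the indifference set $\{u^*=u_0\}$: the convention $U\ge u_0$ puts such $x$ into $p_{z^*}$, yet $x\notin p_{z_n}$ may occur infinitely often, producing $\indi_{p_{z_n}}(x)C(z_n(x))=0$ while $\indi_{p_{z^*}}(x)C(z^*(x))$ could be strictly negative. This is exactly where the sign condition $C\le 0$ on $F_0$ rescues the pointwise $\liminf$-inequality via the sandwich above; without such a sign one would be forced into an additional non-atomicity or continuity-of-indifference argument of the kind foreshadowed for the type-dependent budget extension.
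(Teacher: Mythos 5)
Your proof is correct and follows essentially the same route as the paper: reduction of a minimizing sequence into $F_0$ via Proposition \ref{aprioripartial}, Hausdorff convergence of the (closures of the) ranges, a measurable selection from the cluster-point map of Lemma \ref{ms1}, and a pointwise $\liminf$ inequality for $\indi_{p_{z_n}}(x)C(z_n(x))$ that exploits $C\le 0$ on $F_0$, followed by Fatou's lemma. The only immaterial difference is organizational: you split the pointwise inequality according to whether $x\in p_{z^*}$ (using the sandwich $C(z_n(x))\le \indi_{p_{z_n}}(x)C(z_n(x))\le 0$), whereas the paper splits on the limiting value of $\indi_{p_{z_{n_j}}}(x)$ along a subsequence — both hinge on exactly the same sign observation you correctly identify as the key to handling the indifference set.
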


\begin{proof}
Thanks to Proposition \ref{aprioripartial}, we can find  a minimizing sequence $(z_n)_n$ for  \pref{pappartial} which satisfies $\AAA_n:=z_n(X) \subset F_0$. We now proceed as in the proof of Theorem \ref{existthm1}, by finding a subsequence of $\AAA_n$ which converges in the Hausdorff distance to some compact  subset of $F_0$ denoted $\AAA^*$. We then define $u^*(x):=\max_{z'\in \AAA^*} U(x, z')$ for every $x\in X$. Thanks to Lemma \ref{ms1}, there exists  a measurable map $x\in X \mapsto z^*(x)$ such that for every $x\in X$, one has  $z^*(x) \in  \Gamma^*(x)$, where
\begin{equation}\label{gammasf}
 \Gamma^*(x):=\{z\in  F_0\; : \; \exists n_j \to \infty \; \mbox{ s.t. }  \; z_{n_j}(x) \to z, \; C(z_{n_j}(x)) \to \liminf_n C(z_n(x))\}.
 \end{equation}
 Arguing as  in the proof of Theorem \ref{existthm1}, we find that $u_n:=U(., z_n(.))$ converges pointwise to $u^*$, that $u^*=U(., z^*(.))$, and  that $z^*$ is incentive compatible. To conclude that $z^*$ solves  \pref{pappartial}, thanks to Fatou's Lemma it is enough to show that 
\begin{equation}\label{wwwpp}
\indi_{p_{z^*}} (x) C(z^*(x)) \leq \liminf_n  \indi_{p_{z_n}}(x) C(z_n(x)), \; \forall x\in X.
 \end{equation}
 By construction and lower semicontinuity of $C$, we have  $C(z^*)\leq 0$ and $C(z^*) \leq   \liminf_n C(z_n)$. Let $x\in X$, and $n_j$ be  a subsequence such that both  $ \indi_{p_{z_{n_j}}}(x)$ and $C(z_{n_j}(x))$ converge. If $\indi_{p_{z_{n_j}}}(x)$ converges to $1$ then by convergence of $u_n$ to $u^*$ we have $\indi_{p_{z^*}}(x)=1$ so that \pref{wwwpp} holds. If, on the contrary,  $\indi_{p_{z_{n_j}}}(x)$ converges to $0$, \pref{wwwpp} also holds since  $\indi_{p_{z^*}}(x) C(z^*(x)) \leq 0$. 
\end{proof}

\section{The budget-constrained case}\label{sec-budget}

We now extend the model of section \ref{sec-state} and our existence result to the case where agents have a (type-dependent) budget constraint. This case is relevant in applications; it was considered by Che and Gale \cite{CheGale} and   analyzed from the existence perspective by Monteiro and Page \cite{MonteiroPage}. The authors of  \cite{MonteiroPage}  were able to deal with the discontinuity resulting from the budget constraint thanks to a specific assumption called  nonessentiality which we will not use here. Instead, we will use a non-atomicity assumption on the type distribution.

\subsection{Model and assumptions}

We consider the following setting for the budget-constrained principal-agent problem. The type of the agents will consist of a preference parameter $x$ and a budget $y$. The set  of preference parameters is denoted by $X$  which is equipped with a $\sigma$-algebra $\FF$. The set of budgets is a closed interval $Y$ with a finite lower bound $\uy$ and it is equipped with its Borel algebra which we denote by $\BBB$.  Contracts consist of pairs $(p,q)$ where $p\in \R$ denotes the price of the contract and $q$ denotes an allocation, while the set of allocations is denoted by $Q$ which is assumed to be a Polish space. The outside option $(p_0, q_0)\in \R\times Q$ is assumed to satisfy
\begin{equation}\label{cbpoorest}
p_0 \leq \uy
\end{equation}
which makes it affordable even to agents with the lowest budget. Preferences are given by  a function $V$ : $X\times \R\times Q\to \R$ and we assume that 
\begin{equation}\label{hyph1cb}
\forall x\in X, V(x, ., .) \mbox{ is continuous on $\R\times Q$}, 
\end{equation}
and
\begin{equation}\label{hyph2cb}
\forall (p,q)\in \R\times Q, \; V(., p,q) \mbox{ is $\FF$-measurable on $X$.}
\end{equation}
The joint distribution of types $(x,y)$ is given by a probability measure $\theta$ on $X\times Y$ (equipped with the product $\sigma$-algebra $\FF\otimes \BBB$). Finally the cost for the principal is given by a function $C$ : $\R\times Q\to \R\cup\{+\infty\}$ which we assume to satisfy
 \begin{equation}\label{hyph3cb}
 C \mbox{ is  lower semicontinuous and }  C(p_0, q_0)<+\infty.
  \end{equation}

 \begin{defi}
 A budget-constrained-feasible contract is an $\FF\otimes \BBB$-measurable map $(x,y)\in X\times Y\mapsto (p(x,y), q(x,y))\in \R\times Q$ that satisfies:
 
 \begin{itemize}

 \item the budget constraint: $p(x,y)\leq y$, for every $(x,y)\in X\times Y$;
 
 \item individual rationality, $V(x,p(x,y), q(x,y)) \geq V(x, p_0, q_0)$, for every $(x,y)\in X\times Y$;
 
 \item budget-constrained incentive compatibility, i.e., for every $(x,y,x',y')\in (X\times Y)^2$ if $p(x',y')\leq y$ then 
 \begin{equation}\label{cbic}
 V(x, p(x,y), q(x,y))\geq V(x, p(x',y'), q(x',y')).
 \end{equation}
 \end{itemize}

 \end{defi}

The budget-constrained principal's problem then reads 
\begin{equation}\label{papcb}
\inf \Big\{ \int_{X\times Y} C(p(x,y), q(x,y)) \mbox{d} \theta(x,y) \; : (p,q) \mbox{ budget-constrained-feasible}\Big\}.
\end{equation}

To prove that \pref{papcb} admits solutions, we shall need two more technical assumptions. The first one is a coercivity assumption similar to \pref{hyph4}.  Define $\Gamma$ as the closure of the set of $(p,q)\in \R\times Q$ such that  $C(p,q)\leq C(p_0, q_0)$, and there exists $(x,y)\in X\times Y$ such that $p\leq y$ and   $V(x,p,q)\geq V(x,p_0, q_0)$. Our coercivity assumption is then that
\begin{equation}\label{hyph4cb} 
\Gamma  \mbox{ is compact}.
\end{equation}
Our last assumption is a non-atomicity condition that will enable us to deal with the possible discontinuities caused by the budget constraint on the indirect utility function. Our non-atomicity condition is that for every measurable subset $S$ of $X\times Y$, one has\footnote{When $X$ is a Polish space, by the disintegration Theorem, $\theta$ can be disintegrated with respect to its first marginal $\alpha$ as  $\theta(\mbox{d}x, \mbox{d}y) =\theta(\mbox{d}y \vert x) \alpha(\mbox{d}x)$; in this case, condition \pref{nonatcond}  amounts to saying that for $\alpha$-a.e. $x$, the conditional probability $\theta(.\vert x)$ is atomless.} 
\begin{equation}\label{nonatcond}
\theta(S)=0 \mbox{ whenever $S_x$ is at most countable for every $x\in Q$.}
\end{equation}
Here, given $x\in X$, $S_x$  denotes the slice $S_x:=\{y\in Y \; : \; (x,y)\in S\}$.

\subsection{Existence}

Our first step in the existence proof is the following variant of Proposition \ref{apriori}:
\begin{lem}\label{aprioricb}
Assume \pref{cbpoorest}-\pref{hyph1cb}-\pref{hyph2cb}-\pref{hyph3cb} and \pref{hyph4cb}. Let $(p,q)$ be a budget-constrained-feasible contract.  Then there exists another  budget-constrained-feasible contract $(\tilp, \tilq)$ such that $(\tilp, \tilq)(X\times Y)\subset \Gamma$ and 
\[\int_{X\times Y} C(\tilp(x,y), \tilq(x,y)) \mbox{d}  \theta(x,y) \leq \int_{X\times Y} C(p(x,y), q(x,y)) \mbox{d} \theta(x,y).\] 

\end{lem}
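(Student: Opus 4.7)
The plan is to adapt the argument of Proposition \ref{apriori} almost verbatim, with the only new ingredient being that in the auxiliary maximization we must restrict to contracts the agent of budget $y$ can actually afford, and we must verify that this restricted problem still admits a measurable selector inside $\Gamma$.

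First, I would reduce to the case where $S := \{(x,y)\in X\times Y\,:\,C(p(x,y),q(x,y))\le C(p_0,q_0)\}$ is nonempty, since otherwise the constant contract $(\tilp,\tilq)\equiv (p_0,q_0)$ is admissible (the budget constraint holds thanks to \pref{cbpoorest}) and has strictly lower cost than $(p,q)$ everywhere. Next I would introduce
\[\AAA := \{(p_0,q_0)\}\cup\overline{(p,q)(X\times Y)},\]
which is a closed subset of $\R\times Q$, and observe that $(p_0,q_0)\in \AAA\cap\Gamma$ (for the membership in $\Gamma$, use $p_0\le \uy$ and pick any $(x,y)$; the defining inequalities for $\Gamma$ are trivial). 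In particular $\AAA\cap\Gamma$ is a nonempty compact subset of $\Gamma$.

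The core of the proof is then to define, for every $(x,y)\in X\times Y$,
\[\tilu(x,y):=\max\bigl\{V(x,p',q')\,:\,(p',q')\in \AAA\cap\Gamma,\; p'\le y\bigr\},\]
the maximum being attained because the feasible set is compact (nonempty since it contains $(p_0,q_0)$, using $p_0\le\uy\le y$) and $V(x,\cdot,\cdot)$ is continuous by \pref{hyph1cb}. Writing $u(x,y):=V(x,p(x,y),q(x,y))$, budget-constrained incentive compatibility \pref{cbic} immediately yields $\tilu(x,y)\le u(x,y)$, because every competitor $(p',q')$ appearing in the max belongs to $\overline{(p,q)(X\times Y)}\cup\{(p_0,q_0)\}$ and satisfies $p'\le y$, so is an admissible deviation. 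Moreover, when $(x,y)\in S$, one checks that $(p(x,y),q(x,y))\in\AAA\cap\Gamma$ (use individual rationality and the budget constraint to witness membership in $\Gamma$), and thus $\tilu(x,y)=u(x,y)$.

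Using the measurable selection lemma from the Appendix (applied to the $\FF\otimes\BBB$-measurable, compact-valued map $(x,y)\mapsto\{(p',q')\in \AAA\cap\Gamma\,:\,p'\le y,\; V(x,p',q')=\tilu(x,y)\}$), I would pick an $\FF\otimes\BBB$-measurable selector $(\tilp,\tilq)$, arranging in addition that $(\tilp,\tilq)=(p,q)$ on $S$ (this is legitimate since on $S$ the original $(p,q)$ already lies in the selection set). Then: the budget constraint $\tilp(x,y)\le y$ and the range condition $(\tilp,\tilq)(X\times Y)\subset \AAA\cap\Gamma\subset\Gamma$ hold by construction; individual rationality holds because $(p_0,q_0)\in \AAA\cap\Gamma$ is always a competitor, so $V(x,\tilp,\tilq)=\tilu(x,y)\ge V(x,p_0,q_0)$; budget-constrained incentive compatibility holds because for any $(x',y')$ with $\tilp(x',y')\le y$ the point $(\tilp(x',y'),\tilq(x',y'))\in \AAA\cap\Gamma$ is a competitor in the definition of $\tilu(x,y)$. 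For the cost, either $(x,y)\in S$ and the two costs agree, or $(x,y)\notin S$ and $C(\tilp,\tilq)\le C(p_0,q_0)<C(p,q)$; integrating against $\theta$ gives the desired inequality.

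The only genuinely delicate point, and the one I would treat with care, is the measurability of the argmax map together with the extra constraint $p'\le y$, because the feasible set of the inner maximization now depends on $y$. I expect this to be handled cleanly by the appendix lemma applied to the graph $\{(x,y,p',q'):(p',q')\in\AAA\cap\Gamma,\,p'\le y\}$, which is $\FF\otimes\BBB\otimes\mathcal{B}(\R\times Q)$-measurable as the intersection of measurable sets; the non-atomicity assumption \pref{nonatcond} is not needed here and will only be invoked in the subsequent existence theorem to deal with the discontinuities of the indirect utility.
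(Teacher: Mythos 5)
Your proof follows the paper's argument for Lemma \ref{aprioricb} essentially step for step: the same reduction to the case where the set $S$ of points with $C(p(x,y),q(x,y))\le C(p_0,q_0)$ is nonempty, the same auxiliary maximization of $V(x,\cdot,\cdot)$ over $\AAA\cap\Gamma$ subject to $p'\le y$, the same measurable selection via the appendix (Lemma \ref{ms3}) with the selector forced to coincide with $(p,q)$ on $S$, and the same pointwise cost comparison. This is the paper's proof in all essentials, so there is nothing further to compare.
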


\begin{proof}
As in the proof of Proposition \ref{apriori}, there is no loss of generality in assuming that 
\begin{equation}\label{nonemptyppcb}
\{(x,y)\in X\times Y \; : \; C(p(x,y), q(x,y)) \leq C(p_0, q_0)\}\neq \emptyset.
\end{equation}

Let us define 
\[v(x,y):=V(x, p(x,y), q(x,y)), \; \forall (x,y)\in X\times Y\]
and observe that by individual rationality and budget-constrained incentive compatibility, $v$ can be expressed as
\[v(x,y):=\max_{(p,q)\in \AAA, \; p\leq y} V(x,p,q),\]
where  
\[\AAA:=\{(p_0,q_0)\} \cup \overline{\{(p(x',y'), q(x',y')), \; (x',y')\in X\times Y\}}.\]
Since $\AAA     \cap \{(p,q)\in \R \times Q: p\le y\}   \cap \Gamma$ is non-empty and compact, thanks to \pref{hyph1cb}, we can define the following function (that is everywhere finite):
\[\tilv(x,y):=\max_{(p,q)\in \AAA\cap \Gamma, \; p\leq y} V(x,p,q).\]
Moreover, thanks to Lemma \ref{ms3}, we can choose a maximizer $(\tilp(x,y), \tilq(x,y))$ in the program above which depends in a measurable way on $(x,y)$ and we can also assume that 
\[(\tilp(x,y), \tilq(x,y))=(p(x,y),q(x,y)) \mbox{ whenever } C(p(x,y), q(x,y))\leq C(p_0, q_0).\]
Arguing as in the proof of Proposition \ref{apriori}, we deduce that $(\tilp, \tilq)$ is a budget-constrained-feasible contract and $C(\tilp(x,y), \tilq(x,y))\leq C(p(x,y), q(x,y))$ for every $(x,y)\in X\times Y$.
\end{proof}

We then have the existence result:

\begin{thm}\label{existbcopt}
Assume \pref{cbpoorest}-\pref{hyph1cb}-\pref{hyph2cb}-\pref{hyph3cb}-\pref{hyph4cb} and \pref{nonatcond}. Then \pref{papcb} admits at least one solution. 

\end{thm}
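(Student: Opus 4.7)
My plan is to follow the template of Theorems \ref{existthm1} and \ref{existthmpartial}, with the non-atomicity hypothesis \pref{nonatcond} playing exactly the role needed to bridge the sandwich gap introduced by the budget constraint's one-sided closedness. By Lemma \ref{aprioricb} there is a minimizing sequence $z_n := (p_n, q_n)$ for \pref{papcb} with $z_n(X \times Y) \subset \Gamma$; setting $\AAA_n := \overline{z_n(X \times Y)} \cup \{(p_0, q_0)\}$ and passing to a subsequence, $\AAA_n$ converges in Hausdorff distance to a compact $\AAA^* \subset \Gamma$ containing $(p_0, q_0)$.

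The main new difficulty is that $\{p \le y\}$ is closed under limits from above but not from below, so the liminf argument of Theorem \ref{existthm1} only yields half of the sandwich. Writing $v_n(x, y) := V(x, z_n(x, y))$ and setting
\[v^*(x, y) := \max_{(p, q) \in \AAA^*,\ p \le y} V(x, p, q), \qquad v^{*,-}(x, y) := \sup_{(p, q) \in \AAA^*,\ p < y} V(x, p, q),\]
the limsup bound $\limsup_n v_n(x, y) \le v^*(x, y)$ follows exactly as in Theorem \ref{existthm1}; for the liminf, I pick $(p, q) \in \AAA^*$ with \emph{strict} $p < y$, approximate it by $(p'_n, q'_n) \in \AAA_n$ via Hausdorff convergence so that $p'_n \le y$ eventually, and use budget-constrained incentive compatibility to obtain $v_n(x, y) \ge V(x, p'_n, q'_n) \to V(x, p, q)$; whence $\liminf_n v_n(x, y) \ge v^{*,-}(x, y)$. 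The crucial observation that closes the gap is that for each fixed $x$ the map $y \mapsto v^*(x, y)$ is non-decreasing and right-continuous (by compactness of $\AAA^* \cap \{p \le y\}$) with left-limit exactly $v^{*,-}(x, \cdot)$, so its jump set is at most countable; the non-atomicity hypothesis \pref{nonatcond} then gives $\theta(D) = 0$ for $D := \{v^* > v^{*,-}\}$, and hence $v_n \to v^*$ $\theta$-almost everywhere.

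To produce the limit contract, I apply a measurable selection result (in the spirit of Lemma \ref{ms3}) to the set-valued map
\[\Phi(x, y) := \argmin\bigl\{ C(p, q) : (p, q) \in \AAA^*,\ p \le y,\ V(x, p, q) = v^*(x, y)\bigr\},\]
which is non-empty and compact thanks to compactness of $\AAA^* \cap \{p \le y\}$, continuity of $V$, and lower semicontinuity of $C$. The resulting $z^* = (p^*, q^*)$ satisfies the budget constraint by construction, is individually rational since $(p_0, q_0) \in \AAA^* \cap \{p \le y\}$ (using \pref{cbpoorest}), and is budget-constrained incentive compatible because $z^*(x', y') \in \AAA^* \cap \{p \le y\}$ whenever $p^*(x', y') \le y$, hence $V(x, z^*(x', y')) \le v^*(x, y) = V(x, z^*(x, y))$.

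For optimality, for each $(x, y)$ I extract a subsequence along which $z_{n_j}(x, y) \to \bar z(x, y) \in \AAA^* \cap \{p \le y\}$ and $C(z_{n_j}(x, y)) \to \liminf_n C(z_n(x, y))$; lower semicontinuity gives $C(\bar z(x, y)) \le \liminf_n C(z_n(x, y))$, and for $(x, y) \notin D$ the convergence $v_n \to v^*$ combined with continuity of $V$ forces $V(x, \bar z(x, y)) = v^*(x, y)$, so $\bar z(x, y)$ is admissible in $\Phi(x, y)$ and $C(z^*(x, y)) \le C(\bar z(x, y)) \le \liminf_n C(z_n(x, y))$ $\theta$-a.e. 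Fatou's lemma, legitimate thanks to the uniform lower bound $\min_\Gamma C > -\infty$, then yields $\int C(z^*)\, d\theta \le \liminf_n \int C(z_n)\, d\theta = \inf \pref{papcb}$, so $z^*$ solves \pref{papcb}. The single nontrivial obstacle is the one-sided passage to the limit in the budget constraint; everything else is a routine adaptation of the arguments of Theorems \ref{existthm1}--\ref{existthmpartial} together with standard measurable selection and joint-measurability checks.
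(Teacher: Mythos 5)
Your proof is correct, and the analytic core --- the sandwich between $v^*$ and its left-regularization $v^{*,-}$, the countability of the jump set in $y$ for fixed $x$, and the use of \pref{nonatcond} to make that set $\theta$-negligible --- is exactly the paper's argument. Where you genuinely diverge is in the construction of the limit contract. The paper first takes a measurable selection $(p^*,q^*)$ of the cluster-point map $\Gamma^*$ (so that Fatou and optimality come for free), obtains budget-constrained incentive compatibility only on the regular set $R$, and must then perform a final repair step: it forms $\tA$ from $\{(p_0,q_0)\}\cup\overline{(p^*,q^*)(R)}$, defines $\tilv$, and re-selects a contract that is feasible everywhere and agrees with $(p^*,q^*)$ $\theta$-a.e. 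You instead select directly from $\Phi(x,y)$, the cost-minimizers among the $V$-maximizers over $\AAA^*\cap\{p\le y\}$, so feasibility (budget, individual rationality via \pref{cbpoorest}, and incentive compatibility) holds \emph{everywhere} by construction with no repair needed; the price is that optimality is no longer automatic and must be recovered by comparing $C(z^*(x,y))$ pointwise on $R$ to the cost of a cluster point of $(z_n(x,y))$, which your argument does correctly. The only item you leave implicit that is not literally covered by the appendix is the measurable selection of $\Phi$: this is a two-level $\argmin$-within-$\argmax$, slightly stronger than Lemma \ref{ms3}, though it is routine given that $C$ is bounded on $\Gamma$ (by lower semicontinuity, $C\le C(p_0,q_0)$ on the closure defining $\Gamma$) and can be handled by the same penalization device used in the proof of Lemma \ref{ms3}. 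Both routes are sound; yours trades the paper's final modification on a null set for a marginally heavier selection lemma.
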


\begin{proof}
Let $(p_n, q_n)$ be a minimizing sequence for \pref{papcb}; thanks to Lemma \ref{aprioricb} there is no loss of generality in assuming that $(p_n, q_n)(X\times Y)\subset \Gamma$ where $\Gamma$ is the compact set defined above assumption \pref{hyph4cb}. Defining
\[v_n(x,y):=V(x, p_n(x,y), q_n(x,y)), \; \forall (x,y)\in X\times Y, \]
budget-constrained feasibility  then gives the representation 
\[v_n(x,y)=\max_{(p,q)\in \AAA_n, \; p\leq y}  V(x,p,q), \]
where 
\[\AAA_n:= \{(p_0,q_0)\} \cup \overline{\{(p_n(x',y'), q_n(x',y')), \; (x',y')\in X\times Y\}}.\]
Since each compact set $\AAA_n$ is included in the compact set $\Gamma$, we may also assume, passing to a subsequence if necessary, that there is a compact subset $\AAA^*$  of $\Gamma$, containing $(p_0, q_0)$ such that
\begin{equation}\label{hausdcb}
 \lim_n d_H(\AAA_n, \AAA^*) = 0.
\end{equation} 
Then define
\[v^*(x,y)=\max_{(p,q)\in \AAA^*, \; p\leq y}  V(x,p,q).\]
Thanks to Lemma \ref{ms1}, there exists a measurable selection $(p^*, q^*)$ of the set-valued map defined for every $(x,y)\in X\times Y$ by
\[\begin{split}
\Gamma^*(x,y):=\{(p,q)\in \Gamma \; : \; \exists n_j \to \infty \; \mbox{ s.t.}  \; (p_{n_j}(x,y), q_{n_j}(x,y)) \to (p,q),\\ 
 \; C(p_{n_j}(x,y), q_{n_j}(x,y)) \to \liminf_n C(p_n(x,y), q_n(x,y))\}.
 \end{split}\]
Note that by Fatou's Lemma, 
\begin{equation}\label{fatoucb}
\int_{X\times Y} C(p^*(x,y), q^*(x,y)) \mbox{d} \theta(x,y) \leq \inf \; \pref{papcb}.
\end{equation}
If $(p^*,q^*)$ was budget-constrained-feasible, the proof would be complete, but it is not necessarily the case that $(p^*, q^*)$ is budget-constrained incentive compatible (and this is where assumption  \pref{nonatcond}  comes into play). Note that by construction, using budget-constrained-feasibility of $(p_n, q_n)$, we obviously have that for every $(x,y)\in X\times Y$, $p^*(x,y)\leq y$, $V(x, p^*(x,y), q^*(x,y)) \geq V(x, p_0, q_0)$; note also that $(p^*(x,y), q^*(x,y))\in \AAA^*$ because of \pref{hausdcb}. In particular, since $p^*(x,y)\leq y$, this gives
\begin{equation}\label{inegiccb}
v^*(x,y) \geq V(x, p^*(x,y), q^*(x,y)), \; \forall (x,y)\in X\times Y.
\end{equation}
From \pref{hyph1cb} and \pref{hausdcb}, it is easy to deduce that
\begin{equation}\label{limsupvn}
v^* \geq \limsup_n v_n.
\end{equation}
Now observe that $v^*$ is nondecreasing and upper semi-continuous with respect to its second argument. Hence, defining
\[v^*_{-}(x,y):=\lim_{\eps \to 0^+} v^*(x,y-\eps), \; \forall x\in X, \forall y\in Y\setminus \{\uy\}, v^*_-(x, \uy):=v^*(x, \uy),\]
 the (measurable) \emph{singular} set
\[S:=\{(x,y)\in X\times Y \; : \; v^*(x,y)>v^*_-(x,y)\}\]
has at most countable slices $S_x$ for every $x\in X$. Assumption \pref{nonatcond} thus implies that $\theta(S)=0$. Note also that, again by  \pref{nonatcond}, $\theta(X\times \{\uy\})=0$. Therefore the  \emph{regular} set
$R:=(X\times (Y\setminus \{\uy\})) \setminus S$ is of full measure for $\theta$. Now, let $(x,y)\in R$ and $\eps>0$ be such that $y-\eps \in Y$; by compactness of $\AAA^*$  and definition of $v^*$ there is a $(p,q)\in \AAA^*$ such that $p \leq y-\eps$ and $v^*(x,y-\eps) =V(x, p,q)$. Thanks to \pref{hausdcb}, there is a sequence $(p_{n}, q_{n})$ converging to $(p,q)$  with $(p_{n}, q_{n})\in \AAA_{n}$ for every $n$  and $p_n \leq y$ for large enough $n$ so that
\[\liminf_n v_n(x,y) \geq \liminf_n V(x, p_n, q_n) =v^*(x,y-\eps).\]
Letting $\eps\to 0^+$ thus gives
\begin{equation}\label{liminfvn}
\liminf_n v_n(x,y) \geq v^*_-(x,y).
\end{equation}
Recalling \pref{limsupvn} and using the fact that $v^*=v^*_-$ on $R$, we deduce that
\begin{equation}
v_n \to v^* \mbox{ on $R$}.
\end{equation}
In particular, if $(x,y)\in R$, since $v_n(x,y)=V(x, p_n(x,y), q_n(x,y))$ converges to $v^*(x,y)$, choosing a subsequence of $(p_n(x,y), q_n(x,y))$ converging to $(p^*(x,y), q^*(x,y))$ gives
\[v^*(x,y)=V(x, p^*(x,y), q^*(x,y)).\]
This enables us to conclude that for every $(x,y)\in R$ and any $(x',y')\in X\times Y$ such that $p^*(x',y') \leq y$, one has $v^*(x,y)=V(x, p^*(x,y), q^*(x,y))\geq V(x, p^*(x',y'), q^*(x',y'))$. The last step is to modify the contract $(p^*, q^*)$ on a negligible set to make it budget-constrained feasible. To do this, first set 
\[\tA:=\{(p_0,q_0)\} \cup \overline{\{(p^*(x',y'), q^*(x',y')), \; (x',y')\in R\}}\] 
and 
\[\tilv(x,y):=\max_{(p,q)\in \tA, \; p\leq y} V(x,p,q),\]
and let $(\tilp, \tilq)$ be a measurable selection of the set-valued map $(x,y)\mapsto \{(p,q)\in \tA :  \; p\leq y, \; \tilv(x,y)=V(x,p,q)\}$. Since $\tilv(x,y)=v^*(x,y)=V(x,p^*(x,y), q^*(x,y))$ when $(x,y)\in R$, we may further impose that $(\tilp, \tilq)$ and $(p^*, q^*)$  coincide on $R$, hence $\theta$-almost everywhere. Then, it is straightforward to check that $(\tilp, \tilq)$ is budget-constrained feasible,  and the fact that it solves \pref{papcb}  directly follows from \pref{fatoucb} and the fact that $(\tilp, \tilq)=(p^*, q^*)$  $\theta$-almost everywhere. 

\end{proof}

\section*{Appendix: On measurable selections}
We have invoked the existence of measurable selections  of certain set-valued maps  several times; here, we gather some detailed justifications for the existence of such maps. Given a measurable space $(X, \FF)$, a Polish space $Z$ and a set valued map $\Gamma$ : $X \to 2^Z$ with nonempty values, a measurable selection of $\Gamma$ is by definition an $\FF$-measurable (single-valued) map $z$: $X\to Z$ such that $z(x)\in \Gamma(x)$ for all $x\in X$.  A general existence result for measurable selections is given by the Kuratowski and Ryll-Nardzewski Theorem (see \cite{KRN} and also the survey by Himmelberg \cite{Himmelberg}) which ensures that whenever
\begin{itemize}
\item  $\Gamma(x)$ is closed and nonempty for every $x\in X$, and

\item $\Gamma$ is weakly measurable in the sense that for every \emph{open} subset $U$ of $Z$, the set $\Gamma^{-1}(U):=\{x\in X \; : \; \Gamma(x) \cap U  \neq \emptyset\}$ belongs to $\FF$ 

\end{itemize}
 then $\Gamma$ admits a measurable selection.

\smallskip

In fact we do not use the full generality of the  Kuratowski and Ryll-Nardzewski Theorem.  The set-valued maps we have encountered through the paper satisfy a stronger measurability property than the one above; namely they satisfy that for every \emph{closed} subset $F$ of $Z$, $\Gamma^{-1}(F)$ belongs to $\FF$ (to see that it implies weak measurability it is enough to write the open set $U$ as a countable union of closed sets). The first measurable selection result  we have used is the following:

\begin{lem}\label{ms1}
Let $K$ be a nonempty compact subset of $Z$, $z_n$  a sequence of measurable maps, $z_n$:  $X\to K$,  and  $C$: $K\to \R\cup\{+\infty\}$ be lower semicontinuous and not identically $+\infty$ on $K$. For all $x\in X$, let 
\[\Gamma(x):=\{z\in K \; : \; \exists n_j \to \infty \; : \; z_{n_j}(x) \to z, \; C(z_{n_j}(x)) \to \liminf_n C(z_n(x))\}.\]
Then $\Gamma$ admits a measurable selection.

\end{lem}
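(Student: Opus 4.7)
The plan is to verify the two hypotheses of the Kuratowski-Ryll-Nardzewski measurable selection theorem recalled just above the lemma: nonemptiness and closedness of the values $\Gamma(x)$, together with weak $\FF$-measurability of $\Gamma$. As a preliminary, observe that for each $n$ the composition $C\circ z_n$ is $\FF$-measurable (any lower semicontinuous function is Borel, and the composition of a Borel map with an $\FF$-measurable one is $\FF$-measurable), hence so is the map $c^*:X\to \R\cup\{+\infty\}$ defined by $c^*(x):=\liminf_n C(z_n(x))$. I will also fix once and for all a metric $\rho$ on $\R\cup\{+\infty\}$ compatible with its usual (compact) topology, so as to treat the cases $c^*(x)$ finite or infinite in a unified way.

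The values $\Gamma(x)$ are nonempty and closed. For nonemptiness, pick a subsequence $(n_k)$ with $C(z_{n_k}(x))\to c^*(x)$; compactness of $K$ then yields a further subsequence along which $z_{n_k}(x)$ converges to some $z^*\in K$, and $z^*\in\Gamma(x)$ by construction. For closedness, if $z^{(m)}\in\Gamma(x)$ converge to $z^*\in K$, a standard diagonal extraction produces indices $n_m\to\infty$ with $d(z_{n_m}(x),z^{(m)})<1/m$ and $\rho(C(z_{n_m}(x)),c^*(x))<1/m$, whence $z^*\in\Gamma(x)$.

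The crux is weak $\FF$-measurability of $\Gamma$. For any closed $F\subseteq Z$, I claim the pointwise equivalence
\begin{equation*}
\Gamma(x)\cap F \neq \emptyset \iff \forall k,N\geq 1,\; \exists n\geq N : d(z_n(x),F)<1/k \text{ and } \rho(C(z_n(x)),c^*(x))<1/k.
\end{equation*}
The forward direction is immediate. For the converse, extract $(n_j)$ with $d(z_{n_j}(x),F)<1/j$ and $\rho(C(z_{n_j}(x)),c^*(x))<1/j$; compactness of $K$ yields a further subsequence with $z_{n_j}(x)\to z\in K$, and closedness of $F$ together with $d(z_{n_j}(x),F)\to 0$ forces $z\in F$, so $z\in\Gamma(x)\cap F$. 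Since each set $\{x:d(z_n(x),F)<1/k\}$ and $\{x:\rho(C(z_n(x)),c^*(x))<1/k\}$ is $\FF$-measurable (by measurability of $z_n$, $C\circ z_n$, $c^*$ and continuity of $d(\cdot,F)$ and $\rho$), the right-hand side exhibits $\Gamma^{-1}(F)$ as a countable intersection of countable unions of measurable sets. Writing any open $U\subseteq Z$ as $U=\bigcup_m\{z:d(z,Z\setminus U)\geq 1/m\}$ then gives $\Gamma^{-1}(U)\in\FF$, and Kuratowski-Ryll-Nardzewski delivers the desired selection. The main obstacle is finding the right countable characterization of $\Gamma^{-1}(F)$; once the finite and infinite cases for $c^*(x)$ are unified via a metric on $\R\cup\{+\infty\}$, the rest is routine compactness.
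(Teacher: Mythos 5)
Your proof is correct, and it follows the same overall strategy as the paper: check that $\Gamma(x)$ is nonempty and closed (compactness of $K$ plus a diagonal argument) and that $\Gamma^{-1}(F)\in\FF$ for every closed $F$, then invoke Kuratowski--Ryll-Nardzewski. The only real divergence is in how the measurability of $\Gamma^{-1}(F)$ is exhibited. The paper writes it in one line as the set where $\liminf_n\big(\dist(z_n(x),F)+C(z_n(x))\big)=\liminf_n C(z_n(x))$, while you unfold it as a countable intersection over $k,N$ of unions over $n\ge N$ of the sets $\{x:\ d(z_n(x),F)<1/k,\ \rho(C(z_n(x)),\liminf_m C(z_m(x)))<1/k\}$, with $\rho$ a metric on $\R\cup\{+\infty\}$. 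Your version is longer but slightly more robust: the paper's identity degenerates when $\liminf_n C(z_n(x))=+\infty$ (both sides are then $+\infty$ whether or not $\Gamma(x)$ meets $F$), a case that never occurs in the paper's applications --- there one always has $C(z_n(x))\le C(z_0)<+\infty$ after the a priori estimate --- but which is allowed by the lemma as stated; your $\forall k,N\ \exists n$ characterization handles it correctly. Two cosmetic remarks: $\R\cup\{+\infty\}$ is metrizable but not compact (only metrizability is used, so nothing breaks), and it is worth noting that $\liminf_n C(z_n(x))>-\infty$ because the lower semicontinuous $C$ is bounded below on the compact set $K$, so the second argument of $\rho$ is indeed a point of $\R\cup\{+\infty\}$.
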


\begin{proof}
It is easy to check that $\Gamma(x)$ is a nonempty and closed subset of $Z$ for every $x\in X$. As explained above, a sufficient condition for the existence of a measurable selection is that $\Gamma^{-1}(F)$ is measurable whenever $F$ is closed, but it is easy to check that 
\[\Gamma^{-1}(F)=\{ x\in X \; : \; \liminf_n (\dist(z_n(x), F)+ C(z_n(x)))=\liminf_n C(z_n(x))\},\]
which, written in this way, is obviously measurable.

\end{proof}

In the proof of Theorem \ref{existthm1}, we have used:

\begin{lem}\label{ms2}
Let  $U$ satisfy \pref{hyph1}-\pref{hyph2}, $A$ be a nonempty compact subset of $Z$ and set for every $x\in X$, 
\[v_A(x):=\max_{z\in A} U(x,z).\]
Then $v_A$ is measurable. Moreover, if we define for every $x\in X$, 
\[\Gamma_A(x):=\{z\in A \; : \; U(x,z)=v_A(x)\},\]
 $\Gamma_A$ admits a measurable selection.

\end{lem}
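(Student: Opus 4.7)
The plan is to prove the two claims separately, with the first (measurability of $v_A$) providing a tool that feeds directly into the second (existence of a measurable selection of $\Gamma_A$).

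For the measurability of $v_A$, I would exploit the separability of $A$ (as a compact subset of the Polish space $Z$) to pick a countable dense subset $\{z_k\}_{k\in\N}\subset A$. Because $U(x,\cdot)$ is continuous on $Z$ by \pref{hyph1}, the maximum over the compact set $A$ coincides with the supremum over this dense sequence:
\[
v_A(x)=\sup_{k\in \N} U(x,z_k),\qquad \forall x\in X.
\]
Each function $x\mapsto U(x,z_k)$ is $\FF$-measurable by \pref{hyph2}, and a countable supremum of measurable functions is measurable, so $v_A$ is $\FF$-measurable. The same reasoning shows that $v_B$ is $\FF$-measurable for every \emph{nonempty} compact subset $B\subset Z$.

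For the measurable selection, I would verify the hypotheses of the Kuratowski--Ryll-Nardzewski theorem as recalled in the appendix. Nonemptiness of $\Gamma_A(x)$ is immediate from compactness of $A$ and continuity of $U(x,\cdot)$; closedness follows because $\Gamma_A(x)=A\cap\{z\in Z: U(x,z)=v_A(x)\}$ is the intersection of a closed set with the level set of a continuous function. The crucial point is the strong measurability property that $\Gamma_A^{-1}(F)\in \FF$ for every closed $F\subset Z$. The key observation is that $A\cap F$ is compact, and
\[
\Gamma_A^{-1}(F)=\{x\in X : v_{A\cap F}(x)=v_A(x)\}
\]
whenever $A\cap F\neq\emptyset$ (and $\Gamma_A^{-1}(F)=\emptyset$ otherwise). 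Applying the first part of the lemma to both $A$ and the compact set $A\cap F$ shows that $v_A$ and $v_{A\cap F}$ are $\FF$-measurable, so their coincidence set lies in $\FF$.

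Hence $\Gamma_A$ satisfies the hypotheses of the Kuratowski--Ryll-Nardzewski selection theorem, and a measurable selection of $\Gamma_A$ exists. The only mildly delicate step is the bookkeeping when $A\cap F=\emptyset$, which is trivially handled by setting $v_\emptyset\equiv-\infty$; there are no real obstacles beyond invoking the separability of $A$ at the right moment.
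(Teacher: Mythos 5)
Your proposal is correct and follows essentially the same route as the paper: a countable dense subset of $A$ gives measurability of $v_A$ (and of $v_B$ for any nonempty compact $B$), and then $\Gamma_A^{-1}(F)$ is identified with the coincidence set of $v_A$ and $v_{A\cap F}$ before invoking the Kuratowski--Ryll-Nardzewski theorem. No gaps.
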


\begin{proof}
The fact that $v_A$ is measurable follows by taking $\{a_n\}_n$ a countable and dense subset of $A$ and writing $v_A(x)=\lim_n \max_{k\le n} U(x, a_k)$. Obviously, $\Gamma_{A}(x)$ is nonempty and closed for every $x\in X$. Now, if $F$ is a closed subset of $A$, we claim that $\Gamma_A^{-1}(F)$ is measurable.  As $\Gamma_A^{-1}(F)$ is empty when $A\cap F=\emptyset$, we may assume that the (compact) set $A\cap F$ is nonempty; $\Gamma_A^{-1}(F)$ then is the set where $v_A$ and $v_{A\cap F}$ coincide. It is therefore measurable.

\end{proof}

The following variant of Lemma \ref{ms2} was used for the budget-constrained model:

\begin{lem}\label{ms3}
Let $V$ satisfy \pref{hyph1cb}-\pref{hyph2cb}, $A$ be a nonempty compact subset of $\R\times Q$ and set for every $(x,y)\in X\times Y$
\[v_A(x,y):=\max_{(p,q)\in A, p\leq y} V(x,p,q),\]
(with the convention that $v_A(x,y)=-\infty$ whenever $A\cap (-\infty, y]\times Q=\emptyset$).  Then $v_A$ is measurable. Moreover, defining for every $(x,y)\in X\times Y$, 
\[\Sigma_A(x,y):=\{(p,q)\in A \; : p\le y, \; V(x,p,q)=v_A(x,y)\},\]
$\Sigma_A$ admits an $\FF\otimes \BBB$-measurable selection.

\end{lem}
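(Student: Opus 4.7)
The plan is to extend the proof of Lemma \ref{ms2} to handle two additional features: the budget constraint $p \le y$ varies with $y$, and measurability must now be verified jointly in $(x,y)$. I would proceed in two steps: first establish joint measurability of $v_A$, then deduce the selection property from the Kuratowski and Ryll-Nardzewski theorem exactly as in Lemma \ref{ms2}.

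For the joint measurability of $v_A$, I would first fix $y\in Y$ and observe that the slice $A \cap ((-\infty, y]\times Q)$ is compact (possibly empty, with the convention $v_A(x,y) = -\infty$). Applying Lemma \ref{ms2} to this compact set shows that $x \mapsto v_A(x,y)$ is $\FF$-measurable. Next, I would show that $y \mapsto v_A(x,y)$ is nondecreasing and right-continuous: if $y_n \downarrow y$, the compact sets $K_n := A \cap ((-\infty, y_n]\times Q)$ decrease to $K := A \cap ((-\infty, y]\times Q)$, so any cluster point of maximizers $(p_n, q_n)\in K_n$ lies in $K$, yielding $v_A(x, y_n) \to v_A(x, y)$ by continuity of $V(x,\cdot,\cdot)$. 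Right-continuity in $y$ combined with $\FF$-measurability in $x$ then gives joint measurability through the limit
\[v_A(x, y) = \lim_{n\to\infty} v_A(x, q_n(y)),\]
where $q_n(y)$ denotes the smallest element of $\{k/2^n : k\in\Z\}$ strictly above $y$: each $q_n$ takes countably many values on Borel pieces of $Y$, so $(x,y)\mapsto v_A(x, q_n(y))$ is jointly measurable as a countable sum indexed by the range of $q_n$, and the pointwise limit inherits this.

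For the measurable selection, the values $\Sigma_A(x,y)$ are nonempty (the constrained max is attained over a compact set) and closed (level set of the continuous function $V(x,\cdot,\cdot)$). By the Kuratowski and Ryll-Nardzewski theorem, it suffices to show that $\Sigma_A^{-1}(F) \in \FF\otimes\BBB$ for every closed $F\subset \R\times Q$. Since $A\cap F$ is compact, the function
\[v_{A\cap F}(x,y) := \max\{V(x, p, q) : (p,q)\in A\cap F,\, p\le y\}\]
(with value $-\infty$ if the set is empty) is jointly measurable by the previous step, and then
\[\Sigma_A^{-1}(F) = \{(x,y)\in X\times Y \; : \; v_{A\cap F}(x,y) = v_A(x,y)\}\]
is the coincidence set of two jointly measurable functions, hence measurable. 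I expect the main obstacle to be the joint measurability step: the budget constraint makes $y\mapsto v_A(x,y)$ discontinuous from the left, which is precisely why one must work with right-continuity and approximate by rationals from above rather than quote continuity of a parametric max directly.
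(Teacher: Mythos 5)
Your proof is correct, but the way you obtain joint measurability of $v_A$ is genuinely different from the paper's. The paper penalizes the budget constraint: for $\lambda>0$ it sets $v_A^{\lambda}(x,y):=\max_{(p,q)\in A}\{V(x,p,q)-\lambda (p-y)_+\}$, which is an \emph{unconstrained} maximum over the fixed compact set $A$ of a function that is continuous in $(p,q)$ and measurable in the ``type'' $(x,y)$; Lemma \ref{ms2} (applied with type space $X\times Y$) then gives $\FF\otimes\BBB$-measurability of $v_A^{\lambda}$ directly, and $v_A$ is recovered as the nonincreasing limit $v_A^{\lambda}\downarrow v_A$ as $\lambda\to+\infty$. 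You instead prove measurability in $x$ for each fixed $y$ (Lemma \ref{ms2} applied to the compact slice $A\cap ((-\infty,y]\times Q)$), establish that $y\mapsto v_A(x,y)$ is nondecreasing and right-continuous by the cluster-point argument on maximizers, and conclude joint measurability by dyadic approximation from above --- a Carath\'eodory-type argument. Both are sound; the penalization trick is shorter and avoids any discussion of one-sided continuity, while your route makes explicit the right-continuity of $v_A(\cdot,y)$ in $y$, which is in fact the structural property the paper exploits later (in the proof of Theorem \ref{existbcopt}, where the singular set of left-discontinuities is shown to be $\theta$-negligible). The selection step --- nonempty closed values, $\Sigma_A^{-1}(F)$ as the coincidence set of $v_{A\cap F}$ and $v_A$, then Kuratowski--Ryll-Nardzewski --- is identical to the paper's. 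Two cosmetic points: your approximating values $q_n(y)$ may exit $Y$ when $Y$ is bounded above, so you should note that the defining formula for $v_A(x,\cdot)$ extends to all of $\R$; and, as in the paper's statement, the selection claim implicitly requires $\Sigma_A(x,y)\neq\emptyset$, i.e.\ $v_A(x,y)>-\infty$, everywhere (which holds in all applications since $(p_0,q_0)\in A$ and $p_0\le \uy\le y$).
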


\begin{proof}
For $\lambda>0$ set
\[v_A^{\lambda}(x,y):= \max_{(p,q)\in A} \{V(x,p,q)-\lambda (p-y)_+)\}.\]
Thanks to Lemma \ref{ms2}, $v_{A}^{\lambda}$ is measurable and it is easy to check  that $v_A^{\lambda}$ converges in a nonincreasing way to $v_A$ as $\lambda\to +\infty$, which shows that $v_A$ is measurable. The fact that $\Sigma_A$ admits a measurable selection can then be shown as in the proof of Lemma \ref{ms2}. 

\end{proof}

{ \bf{Acknowledgements:}} Guillaume Carlier is grateful to the Agence Nationale de la Recherche for its support through the projects MAGA (ANR-16-CE40-0014) and MFG (ANR-16-CE40-0015-01).
	Kelvin Shuangjian Zhang is thankful for the support of Mitacs Globalink Research Award during his visit to Guillaume Carlier at MOKAPLAN, INRIA-Paris in the summer 2018. Both authors are grateful to an anonymous referee who suggested the partial participation models  in section \ref{sec-pppap} and to Brendan Pass for his helpful comments.

\bibliographystyle{plain}

\bibliography{existence_minimal_assumptions}

\end{document}